\pgfplotsset{compat=1.16}
\pgfplotsset{compat=1.14} 
\tikzstyle{vertex} = [fill,shape=circle,node distance=30pt]
\tikzstyle{edge} = [fill,opacity=.6,fill opacity=.5,line cap=round, line join=round, line width=10pt]
\tikzstyle{elabel} =  [fill,shape=circle,node distance=30pt,fill opacity=.9]
\definecolor{mygray}{gray}{0.95}
\definecolor{mypurple}{rgb}{0.59, 0.44, 0.84}
\newtheorem{thm}{Theorem}
\newtheorem{prop}{Proposition}
\newtheorem{lemma}{Lemma}
\newtheorem{defn}{Definition}
\crefname{thm}{Theorem}{Theorems}
\Crefname{thm}{Theorem}{Theorems}
\crefname{defn}{Definition}{Definitions}
\Crefname{defn}{Definition}{Definitions}
\crefname{prop}{Proposition}{Propositions}
\Crefname{prop}{Proposition}{Propositions}
\crefname{corol}{Corollary}{Corollaries}
\Crefname{corol}{Corollary}{Corollaries}
\crefname{lemma}{Lemma}{Lemmas}
\Crefname{lemma}{Lemma}{Lemmas}
\newcommand{\R}{\mathbb{R}}
\newcommand{\xv}{\mathbf{x}}
\newcommand{\uv}{\mathbf{u}}
\newcommand{\bv}{\mathbf{b}}
\newcommand{\Av}{\mathbf{A}}
\newcommand{\Cv}{\mathbf{C}}
\newcommand{\Bv}{\mathbf{B}}
\newcommand{\tA}{\textsf{A}}
\newcommand{\fv}{\mathbf{f}}
\newcommand{\ev}{\mathbf{e}}
\newcommand{\yv}{\mathbf{y}}
\newcommand{\h}{\mathcal{H}}
\newcommand{\V}{\mathcal{V}}
\newcommand{\E}{\mathcal{E}}
\newcommand{\ie}{i.e. }
\newcommand{\e}{\mathcal{E}}
\title{\bf Structural Controllability of Polysystems via Directed Hypergraph Representation}
\author{Joshua Pickard\thanks{Joshua Pickard is with the Department of Computational Medicine and Bioinformatics, Medical School, University of Michigan, Ann Arbor, MI 48109
USA (e-mail: jpic@umich.edu). This work is supported by the National Institute of General Medical Sciences
under award number GM150581.}
}
\begin{document}

\maketitle

\begin{abstract}
This document explores structural controllability of polynomial dynamical systems or polysystems. We extend Lin's concept of structural controllability for linear systems, offering hypergraph-theoretic methods to rapidly assess strong controllability. Our main result establishes that a polysytem is structurally controllable when its hypergraph representation contains no hyperedge dilation and no inaccessible vertices. We propose two efficient, scalable algorithms to validate these conditions.
\end{abstract}

\section{Introduction}
In this document we study the controllability of the system
\begin{equation}\label{eq: polynomial with linear inputs}
    \dot{\xv}=\fv(\xv)+\Bv\uv
\end{equation}
where $\xv\in\R^n$ is the system state, $\fv$ is an odd, homogeneous polynomial of $\xv$, $\Bv\in\R^{n\times m}$ is the control matrix, and $\uv\in\R^m$ is the system input. Limitations in the precision to which the entries of $\fv$ and $\Bv$ are known motivate structural controllability, initially introduced for linear dynamical systems.

\begin{defn}[Structurally Controllable, \cite{lin1974structural}]
    Consider the linear dynamical system $\dot{\xv}=\Av\xv+\Bv\uv$ where $\Av\in\R^{n\times n}.$ The pair $(\Av,\Bv)$ is structurally controllable if and only if there exist a completely controllable pair $(\Hat{\Av},\Hat{\Bv})$, where $\hat{\Av}$ is any matrix with the same sparsity structure as $\Av.$
\end{defn}

There are several advantages of strutructural -- over complete or strong -- controllability. Graph theoretic tests to verify structural controllability are faster than PHB or Kalman-based tests required for complete or strong controllability (\cite{liu2011controllability}); system identification is rarely performed to infinite precision, which limits the claims we may make about the complete/strong controllability of engineered or biological systems (\cite{lee1967foundations}); while dynamics of many real systems can't be described perfectly as linear or polynomial systems, the connectedness of the network can be precisely identified. This motivated initial tests for structural controllability of a linear system with a single input where $\bv\in\R^n$ takes the place of $\Bv.$

\begin{thm}[\cite{lin1974structural}]\label{thm: lin 1974}
    The following properties are equivalent to structural controllability of a linear system defined by the pair $(\Av,\bv):$
    \begin{itemize}
        \item There is no permutation of the cooridinates to bring the pair $(\Av,\bv)$ to the form
        \begin{equation*}
            \begin{bmatrix}
                \Av_{11}&0\\\Av_{21}&\Av_{22}
            \end{bmatrix}\text{ and }\begin{bmatrix}
                0\\\bv_2
            \end{bmatrix}\text{ or }rank\big(\begin{bmatrix}
                \Av&\bv
            \end{bmatrix}\big)<n
        \end{equation*}
        \item The graph of $(\Av,\bv)$ is span by a cactus
        \item The graph of $(\Av,\bv)$ contains no inaccessable node and no dilation
    \end{itemize}
\end{thm}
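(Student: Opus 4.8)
The plan is to route all three conditions through a single analytic characterization of structural controllability, namely the generic rank of the (single-input) controllability matrix $\mathbf{C} = [\,\bv,\ \Av\bv,\ \dots,\ \Av^{n-1}\bv\,]$, which is $n\times n$. First I would observe that every entry of $\mathbf{C}$ is a polynomial in the free parameters of $\Av$ and $\bv$ (the entries the sparsity pattern permits to be nonzero). Hence $\det\mathbf{C}$ is a polynomial in those parameters, and its zero locus is either all of parameter space or a proper algebraic variety of measure zero. Consequently $(\Av,\bv)$ is structurally controllable if and only if $\det\mathbf{C}\neq 0$ for generic parameter values, i.e. the structural (generic) rank of $\mathbf{C}$ equals $n$. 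This reduces the theorem to showing that each listed condition is equivalent to $\operatorname{rank}_{\mathrm{gen}}\mathbf{C}=n$.

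Next I would establish the equivalence of the algebraic condition and the graph condition (iii), which splits into two independent translations. The forbidden block form encodes exactly the failure of accessibility: a permutation bringing $(\Av,\bv)$ to the displayed lower-block-triangular shape, with a zero top-right block of $\Av$ and a zero top block of $\bv$, exists if and only if the set of state nodes reachable from the input $u$ is a proper subset of the states, i.e. some node is inaccessible. This is immediate from reading the adjacency structure, since the unreachable states index rows and columns receiving no arc from $u$ or from the reachable set. The rank condition $\operatorname{rank}[\Av\ \bv]=n$ is the algebraic counterpart of the absence of a dilation: regarding $[\Av\ \bv]$ as a structured matrix, its generic rank equals the maximum matching in the bipartite graph between the $n$ state-derivative rows and the $n+1$ candidate predecessor columns. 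By the defect form of Hall's theorem this matching saturates all rows if and only if no state set $S$ satisfies $|T(S)|<|S|$, where $T(S)$ denotes the set of nodes having an arc into $S$; this is precisely the no-dilation condition. Thus (i) $\Leftrightarrow$ (iii).

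I would then prove (iii) $\Leftrightarrow$ (ii). The direction ``cactus $\Rightarrow$ accessible and dilation-free'' is the easy one: a stem is a directed path out of the input, so every node of a spanning cactus is reachable, while the stem together with the buds supplies a matching saturating all state nodes, so Hall's condition holds and no dilation can occur. The converse, constructing a spanning cactus from accessibility and the no-dilation hypothesis, is the combinatorial core. Here I would use the matching guaranteed by the no-dilation condition to decompose the state nodes into a disjoint union of simple paths and cycles, then use accessibility to graft each component onto a stem: paths extend the stem, and cycles become buds attached through a distinguished arc emanating from an already-reached node. Ensuring the grafting is globally consistent, so that the result is genuinely a cactus spanning all $n$ state nodes, is where the argument must be most careful.

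Finally I would close the loop to the analytic characterization by showing the spanning cactus forces $\operatorname{rank}_{\mathrm{gen}}\mathbf{C}=n$. The idea is that $\det\mathbf{C}$, expanded as a signed sum over combinatorial structures in the graph, contains a term corresponding to the spanning cactus; because distinct cactus configurations contribute monomials in distinct parameters, this term cannot be cancelled by the others, so $\det\mathbf{C}$ is not the zero polynomial. The reverse implications are comparatively routine: an inaccessible node yields an autonomous, uncontrollable subsystem for every realization, and a dilation forces $\det\mathbf{C}\equiv 0$, so either defect drives the structural rank below $n$. The \emph{main obstacle} is therefore the cactus-construction step together with this no-cancellation argument: one must certify that the combinatorial structure produces a surviving monomial in the symbolic determinant rather than an accidental cancellation, which is exactly the subtlety that makes structural, rather than numerical, controllability the right notion.
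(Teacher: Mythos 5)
This theorem is quoted in the paper as background from \cite{lin1974structural}; the paper itself offers no proof of it, so your attempt can only be judged against the standard arguments in the literature. Your outline follows what is essentially that standard route: (a) reduce structural controllability to the generic rank of $\Cv=\begin{bmatrix}\bv & \Av\bv & \cdots & \Av^{n-1}\bv\end{bmatrix}$ via the polynomial/measure-zero argument; (b) translate the forbidden block form into inaccessibility, and the generic rank of $\begin{bmatrix}\Av & \bv\end{bmatrix}$ into Hall's condition (no dilation) through the term-rank/maximum-matching characterization of structured matrices; and (c) prove the cactus equivalence by extracting disjoint paths and cycles from a row-saturating matching and grafting them onto a stem using accessibility. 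These are the same ingredients used by Lin and by later treatments such as \cite{liu2011controllability}, and each translation you give ((block form $\Leftrightarrow$ inaccessible node), (generic rank of $\begin{bmatrix}\Av & \bv\end{bmatrix}$ $\Leftrightarrow$ no dilation), (cactus $\Rightarrow$ both conditions)) is correct.

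The one step that is genuinely under-powered is your closing no-cancellation claim: you assert that the spanning cactus contributes a monomial to $\det\Cv$ that survives because ``distinct cactus configurations contribute monomials in distinct parameters.'' That is not automatic. The entries of $\Cv$ are walk-generating polynomials in the free parameters, and distinct permutation terms in the determinant expansion can produce the \emph{same} monomial with opposite signs, so the survival of any particular term needs an actual argument (for instance, isolating a unique extremal monomial under some ordering). The cleaner and standard way to finish is to drop the determinant expansion altogether once the spanning cactus is in hand: set every parameter outside the cactus to zero and prove, by induction on the cactus structure, that the resulting realization is controllable for generic edge weights (a stem alone is a controllable chain; attaching a bud preserves controllability). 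That exhibits a controllable pair with the given sparsity pattern, which is structural controllability by definition. With that substitution your plan is a complete and correct proof; as written, the cancellation step is the one genuine gap.
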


Several extensions were made immediately after the work of \cite{lin1974structural} to consider multiinput control systems (\cite{shields1976structural}), strong structural controllability (\cite{mayeda1979strong}), and other similar varieties of the problem. Inspired by \cite{liu2011controllability}, there has been a recent resurgence in the study and utilization of structural controllability (\cite{mousavi2018null, gao2014target}). While there are limitations to the structural controllability (\cite{cowan2012nodal}), this avenue of work presents several advantages.

In this document, we define and propose conditions for the structural controllability of odd, homogeneous polynomial systems with linear inputs defined by a pair $(\fv,\Bv)$ or $(\tA,\Bv)$ where $\tA$ is the tensor representation of $\fv.$

\begin{thm}\label{thm: structural control of polynomials}
    A pair $(\tA,\Bv)$ is structurally controllable if there exist a strongly controllable pair $(\hat{\tA},\hat{\Bv}),$ which occurs if and only if the hypergraph of $(\tA,\Bv)$ contains no hyperedge dialation and no inaccessable vertices.
\end{thm}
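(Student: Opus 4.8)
The plan is to mirror the classical proof of \Cref{thm: lin 1974} while lifting each ingredient from the graph/matrix setting to the directed hypergraph/tensor setting. The first task is to pin down an algebraic controllability criterion for the polysystem that plays the role the Kalman/PBH rank condition plays for a linear pair. Because $\fv$ is odd and homogeneous, the drift vector field satisfies $\fv(-\xv)=-\fv(\xv)$, so the uncontrolled flow is invariant under $\xv\mapsto-\xv$ while the constant input fields (columns of $\Bv$) merely flip sign; this symmetry makes the reachable sets symmetric and lets me invoke the accessibility-implies-controllability machinery for symmetric control-affine systems. I would therefore reduce strong controllability of a realization $(\hat{\tA},\hat{\Bv})$ to the accessibility (Lie algebra) rank condition, and express the iterated bracket directions as contractions of the tensor $\hat{\tA}$ against the columns of $\hat{\Bv}$.

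Second, I would make the hypergraph dictionary explicit so the combinatorics align with this rank condition: each nonzero entry $\tA_{i,j_1,\dots,j_k}$ becomes a directed hyperedge with tail $\{j_1,\dots,j_k\}$ and head $i$, and each column of $\Bv$ contributes an input vertex. Inaccessibility of a vertex then means no directed hyperpath reaches it from the input vertices, and a hyperedge dilation is the hypergraph analogue of a violated Hall/matching condition, i.e. a vertex set whose incoming hyperedges engage too few distinct generating vertices. The object of this step is to show that these two purely combinatorial defects are exactly the obstructions to the two pieces of the accessibility rank condition from the first step: inaccessibility blocks the reachability/spanning part, while a dilation forces an identically rank-deficient bracket configuration, i.e. a determinantal minor that vanishes for \emph{every} structurally admissible choice of entries.

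Third, assuming neither defect is present, I would run a genericity argument in the spirit of Lin. The accessibility rank condition holds at a parameter point iff a particular family of polynomials in the free (nonzero) entries of $\hat{\tA}$ and $\hat{\Bv}$ does not all vanish; the combinatorial hypotheses guarantee that at least one such polynomial is not identically zero, since the matching furnished by the no-dilation condition, together with the hyperpaths furnished by accessibility, produces an explicit nonvanishing term. Hence the rank condition holds on a dense open set of admissible parameters, which yields a strongly controllable $(\hat{\tA},\hat{\Bv})$ and therefore structural controllability. For the converse direction I would argue that each defect is a hard obstruction: an inaccessible vertex leaves a coordinate subspace forever unreachable under every realization, and a dilation makes the governing minor vanish identically, so no admissible realization can be strongly controllable.

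The main obstacle I anticipate is getting the first and third steps to work in tandem. Unlike the linear case, where controllability is captured by a single determinantal (Kalman) condition that is visibly polynomial in the entries, the polysystem's accessibility distribution is built from iterated Lie brackets of the drift and the input fields, so I must verify both that the hypergraph-encoded bracket directions genuinely span $\R^n$ for generic admissible entries and, crucially, that the no-dilation condition is the precise combinatorial certificate that the associated minors are not identically zero. Calibrating the definition of hyperedge dilation so that it matches exactly the vanishing/non-vanishing dichotomy of these bracket-derived minors is the delicate heart of the argument.
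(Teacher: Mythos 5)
Your plan has the right two--sided architecture, but it breaks down at exactly the point you flag as delicate, and the failure is substantive rather than technical: your combinatorial dictionary mis-calibrates the dilation. You encode each nonzero entry $\tA_{i,j_1,\dots,j_{k-1}}$ as its own hyperedge with singleton head $\{i\}$, and you read a dilation as a vertex set whose incoming hyperedges ``engage too few distinct generating vertices.'' Consider an odd cubic polysystem ($k=4$) in which $\dot{\xv}_1=a_1\xv_3\xv_4\xv_5$ and $\dot{\xv}_2=a_2\xv_3\xv_4\xv_5$, with $v_1,v_2$ not directly actuated and the remaining coordinates actuated and interconnected so that every vertex is reachable. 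For \emph{every} admissible realization, rows $1$ and $2$ of the nonlinear controllability matrix $\Cv(\hat{\tA},\hat{\Bv})$ of \cref{eq: nonlinear controllability matrix} are proportional ($a_2\cdot\text{row}_1=a_1\cdot\text{row}_2$; equivalently $a_2\xv_1-a_1\xv_2$ is conserved), so no realization is strongly controllable. Yet under your encoding the set $S=\{v_1,v_2\}$ has two distinct incoming hyperedges whose tails engage three generating vertices, so no Hall-type violation occurs, and both vertices are reachable by hyperpaths; your criterion would wrongly certify structural controllability. The paper avoids this precisely by merging all head vertices that share a tail into a single hyperedge (tails are unique in \Cref{def: HG of tA Bv}) and by counting incoming \emph{hyperedges} rather than neighborhood vertices. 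The pivot your plan is missing is the notion of individually accessible vertices: \cref{prop: dilaiton is not SC} identifies dilations with vertices that are accessible but not individually accessible, and \cref{prop: non individually accessible is not SC} converts that defect into the row dependence exhibited above.

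Beyond calibration, the sufficiency half of your genericity step --- that no dilation plus no inaccessible vertex forces some structurally admissible minor to be nonzero --- is only announced; the matching-plus-hyperpaths term you hope to exhibit is never constructed, and you acknowledge as much, so the proposal as written proves neither direction. Two smaller repairs: the symmetry heuristic in your first step (odd drift implies symmetric reachable sets, hence accessibility equals controllability) is not a proof of strong controllability; the statement you need is exactly \Cref{thm: jurdjevic and kupka}, which you should simply invoke, as the paper does. Also, your genericity argument quantifies over the full Lie algebra, which a priori involves infinitely many bracket polynomials; you need either a Noetherianity argument to reduce to finitely many minors or, as the paper does, the finite controllability matrix of \cref{eq: nonlinear controllability matrix}. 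The genericity framing itself is legitimate and closer in spirit to Lin's original argument than the paper's direct walk/span reasoning, but as it stands it does not discharge the central combinatorial--algebraic equivalence that the theorem asserts.
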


We will prove \Cref{thm: structural control of polynomials} and develop a pair of algorithms that verify this condition in quadratic time throughout this document, which is organized as follows. \Cref{sec: polynomial control} provides preliminary results on polynomial control systems and their relationship to uniform, undirected hypergraphs, and then \Cref{sec: directed hypergraphs} provides several definitions for directed hypergraph structures. In \Cref{sec: main result}, \Cref{thm: structural control of polynomials} is proven using a directed hypergraph representation of polynomial systems. Finally, in \Cref{sec: algorithms}, fast hypergraph algorithms are proposed to test for structural controllability. 



\section{Polynomial Systems}\label{sec: polynomial control}

\cite{jurdjevic1985polynomial} derived conditions for the strong controllability of odd homogeneous polynomials with linear inputs.\footnote{A polynomial $p(\xv)$ is homogeneous if $p(\lambda\xv)=\lambda^dp(\xv).$ $d$ is the degree of the polynomial $p$ and the parity of $p$ is that of $d.$}

\begin{thm}[\cite{jurdjevic1985polynomial}]\label{thm: jurdjevic and kupka}
    Suppose that $\textbf{f}$ is a homogeneous polynomial system of odd degree. Consider the following system 
    \begin{equation}\label{eq: jk equation}
        \dot{\textbf{x}} = \textbf{f}(\textbf{x}) + \sum_{j=1}^m\textbf{b}_ju_j.
    \end{equation}
    Then system \ref{eq: jk equation} is strongly controllable if and only if the rank of the Lie algebra spanned by the set of vector fields $\{\textbf{f}, \textbf{b}_1,\textbf{b}_2,\dots,\textbf{b}_m\}$ is $n$ at all points of $\mathbb{R}^n$.  Moreover, the Lie algebra is of full rank at all points of $\mathbb{R}^n$ if and only if it is of full rank at the origin. 
\end{thm}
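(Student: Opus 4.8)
The plan is to treat \eqref{eq: jk equation} as an analytic control-affine system $\dot x = f(x) + \sum_{j} u_j b_j$ and to identify \emph{strong controllability} with $\mathcal{R}(x_0) = \mathbb{R}^n$ for every $x_0$, where $\mathcal{R}(x_0)$ is the forward reachable set. The two implications and the ``moreover'' clause are handled by three different tools: the orbit theorem for the necessity of the rank condition, a grading argument for the reduction to the origin, and a reachable-set/Lie-saturation argument for sufficiency. Throughout I would exploit that $f$ is homogeneous of odd degree $d$, so that $f(-x) = -f(x)$ and $f(\lambda x) = \lambda^d f(x)$.

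For the \emph{necessity} of the rank condition I would invoke the orbit theorem of Sussmann together with Nagano's theorem: since the data are analytic, the orbit through any point is an immersed submanifold whose tangent space at each point is exactly the evaluation of the Lie algebra $\mathcal{L}$ generated by $\{f, b_1, \dots, b_m\}$. If the system is controllable the orbit through every point is all of $\mathbb{R}^n$, forcing $\dim \mathcal{L}(x) = n$ everywhere. For the \emph{moreover} clause I would grade $\mathcal{L}$ by the Euler field $E = \sum_i x_i \partial_{x_i}$: a direct computation gives $[E, b_j] = -b_j$ and $[E, f] = (d-1)f$, so every iterated bracket is homogeneous and $\mathcal{L}$ splits into weight spaces. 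A weight-$w$ field $X$ has polynomial components of degree $w+1$, whence $X(\lambda x) = \lambda^{w+1} X(x)$; choosing a homogeneous basis shows the evaluation subspace $\mathcal{L}(\lambda x)$ equals $\mathcal{L}(x)$ for every $\lambda > 0$, i.e. the rank is constant along rays. As the rank condition is open, full rank at the origin persists on a neighborhood, and ray-constancy carries it to every nonzero point, proving that rank $n$ at the origin is equivalent to rank $n$ on all of $\mathbb{R}^n$.

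For \emph{sufficiency} I would first apply the Chow--Rashevskii / Sussmann--Jurdjevic theorem: the rank condition makes the system accessible, so each $\mathcal{R}(x_0)$ has nonempty interior. The grading sharpens this, since at the origin only weight-$(-1)$ fields are nonzero and every weight-$(-1)$ bracket contains at least one factor $b_j$ (a pure word in $f$ has weight a multiple of $d-1\geq 2$), hence lies in the strong-accessibility ideal; thus full rank at the origin already forces the strong accessibility rank condition. Next I would record two symmetries. Rescaling a trajectory by $x \mapsto \lambda x$, $t \mapsto \lambda^{-(d-1)} t$, $u_j \mapsto \lambda^{d} u_j$ again solves the system, giving $\mathcal{R}(\lambda x_0) = \lambda\, \mathcal{R}(x_0)$; in particular $\mathcal{R}(0)$ is a cone. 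Oddness gives the second symmetry: $(x, u) \mapsto (-x, -u)$ maps trajectories to trajectories, so $\mathcal{R}(0) = -\mathcal{R}(0)$, and applied to the time-reversed system $\dot x = -f(x) + \sum_j v_j b_j$ (whose generating Lie algebra has the same rank) the controllable-to-origin set $\mathcal{C}(0)$ is likewise a symmetric cone with nonempty interior. Granting local controllability at the origin, i.e. a neighborhood $N$ of $0$ with $N \subseteq \mathcal{R}(0)$ and $N \subseteq \mathcal{C}(0)$, the cone property finishes the argument: $\mathcal{R}(0) = \bigcup_{\lambda>0} \lambda N = \mathbb{R}^n$ and similarly $\mathcal{C}(0) = \mathbb{R}^n$, and transitivity of reachability through the origin yields $\mathcal{R}(x_0) = \mathbb{R}^n$ for all $x_0$.

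The main obstacle is precisely this local controllability at the origin, which is where the oddness hypothesis must do its work. The drift cannot be reversed by time-rescaling, since for odd $d$ no real $\lambda$ makes $\lambda^{d-1}<0$, so reversibility has to be extracted from the bracket structure. Concretely, I would aim to show that $-f$ lies in the Lie saturate of $\{f, \pm b_1, \dots, \pm b_m\}$: because $\operatorname{ad}_{b_j}$ lowers weight by one and $f$ has weight $d-1$, the field $e^{t\,\operatorname{ad}_{b_j}} f = \sum_{k=0}^{d} \tfrac{t^k}{k!}\, \operatorname{ad}_{b_j}^{k} f$ is a \emph{finite} polynomial in $t$, and since $\pm b_j$ are both available their flows normalize the saturate; one then tries to show that the closed convex cone generated by these curves as $t \to \pm\infty$ contains $-f$, which gives reversibility and hence, via an open-equivalence-relation argument on the connected space $\mathbb{R}^n$, controllability. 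Verifying this saturation step, equivalently checking a Sussmann-type sufficient condition for small-time local controllability at the equilibrium in which the $u \mapsto -u$ symmetry neutralizes the obstructing (even-in-$b$) brackets, is the delicate heart of the proof; the remaining steps are comparatively routine given the analytic and homogeneous structure.
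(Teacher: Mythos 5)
First, a point of comparison: the paper does not prove this statement at all --- it is imported verbatim from \cite{jurdjevic1985polynomial}, so there is no internal proof to measure your attempt against; it has to be judged against the original Jurdjevic--Kupka argument.

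Your necessity argument (Sussmann's orbit theorem plus Nagano in the analytic category) and your handling of the ``moreover'' clause (grading by the Euler field, so homogeneous fields of weight $w$ satisfy $X(\lambda x)=\lambda^{w+1}X(x)$, hence the evaluation subspace of the Lie algebra is constant along rays, and ray-invariance plus lower semicontinuity of rank near $0$ upgrades full rank at the origin to full rank everywhere) are both correct and essentially standard. The genuine gap is in sufficiency, and you name it yourself: the entire argument is reduced to small-time local controllability at the origin, equivalently to showing $-\mathbf{f}$ lies in the Lie saturate, and that step is only gestured at, never carried out. This is not a routine missing detail; it is the actual content of the theorem. The origin is an equilibrium of the drift, and the symmetries you establish --- $\mathcal{R}(\lambda x_0)=\lambda\,\mathcal{R}(x_0)$ from scaling and $\mathcal{R}(0)=-\mathcal{R}(0)$ from oddness --- are valid but cannot by themselves exclude, say, a reachable set from $0$ that is a proper symmetric cone with nonempty interior; you must produce a full neighborhood of $0$ inside $\mathcal{R}(0)$, and that is exactly where the oddness of $d$ must work quantitatively rather than as a mere $\mathbb{Z}_2$-symmetry. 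The Jurdjevic--Kupka proof does this with the machinery you allude to: unbounded controls place $\pm\mathbf{b}_j$ in the saturate; conjugating the drift by the flows of $\pm\mathbf{b}_j$ (which are translations) puts the degree-$d$ polynomial curves $t\mapsto\mathbf{f}(\,\cdot-t\mathbf{b}_j)$ inside the closed convex cone of the saturate; and because $d$ is odd, extracting leading coefficients as $t\to+\infty$ and $t\to-\infty$ yields constant fields of \emph{both} signs, which seeds an induction that eventually places $-\mathbf{f}$ itself in the saturate, making the family symmetric so that Chow--Rashevskii finishes. Your sketch stops exactly before this induction, so as written it establishes accessibility plus symmetry properties of reachable sets, not controllability. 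A smaller issue: you identify ``strong controllability'' with $\mathcal{R}(x_0)=\mathbb{R}^n$, whereas in \cite{jurdjevic1985polynomial} it means controllability in arbitrarily small time; your time-rescaling symmetry can close that gap, but the reduction needs to be stated explicitly.
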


The rank of the Lie algebra can be found by evaluating the recursive Lie brackets of $\{\textbf{f},\Bv_{:1},\dots,\Bv_{m:}\}$ at the origin. The Lie bracket of two vector fields $\textbf{f}$ and $\textbf{g}$ at a point \textbf{x} is defined as
\begin{equation}\notag
    [\textbf{f},\textbf{g}]_\textbf{x} = \nabla\textbf{g}(\textbf{x})\textbf{f}(\textbf{x}) - \nabla\textbf{f}(\textbf{x})\textbf{g}(\textbf{x}),
\end{equation}  
where $\nabla$ is the gradient operator. Directly evaluating the Lie algebra generated by a polynomial vector field and set of linear inputs that act as constant vector fields is computationally intractable for large systems; however, the tensor representation of a polynomial allows for the utilization of the Singular Value Decomposition (SVD) to accelerate construction of the Lie algebra.

\subsection{Tensor Representation of Polynomials}
Any homogeneous polynomial can be represented through repeated tensor contractions.

\begin{defn}[Tensor Representation of Polynomials]
    Given a $k$-mode, $n$-dimensional tensor $\tA$ and a vector $\xv\in\R^n,$ the repeated tensor-vector multiplication or contraction
    \begin{equation}\label{eq: homogeneous polynomial}
        \tA\times_1\xv\times_2\dots\times_{k-1}\xv=\tA\xv^{k-1}
    \end{equation}
    is referred to as the homogeneous polynomial of $\tA.$\footnote{We use $\times_i$ to denote the Tucker product or contraction of $\xv$ with the $i$th mode of $\tA.$ Throughout the remainder of the article, we adopt the conventions of multilinear algebra presented in \cite{kolda2009tensor}}
\end{defn}

We are able to represent any homogeneous polynomial in the form of \cref{eq: homogeneous polynomial}, and when $\tA$ has $k$-modes or indices, the expression $\tA\xv^{k-1}$ is a $k-1$th degree, homogeneous polynomial. Then, when $k$ has even parity, $\tA\xv^{k-1}$ is an odd homogeneous polynomial and we are able to apply \cref{thm: jurdjevic and kupka}. In the remainder of this document, pairs $(\fv,\Bv)$ and $(\tA,\Bv)$ refer to odd homogeneous polynomials and are used interchangeably, and we may rewrite our equation of interest \cref{eq: polynomial with linear inputs} as
\begin{equation}\label{eq: main equation}
    \dot{\xv}=\fv(\xv)+\Bv\uv\Longleftrightarrow\dot{\xv}=\tA\xv^{k-1}+\Bv\uv.
\end{equation}

The homogeneous polynomial of $\tA\xv^{k-1}$ can be expressed equivalently through tensor matricization and Kronecker exponentiation as
\begin{equation}\label{eq: tensor unfolded polynomial}
    \tA\xv^{[k-1]}=\tA_{(k)}\times(\overbrace{\xv\otimes\dots\otimes\xv}^{\text{$k-1$ times}})=\tA_{(k)}\xv^{[k-1]},
\end{equation}
where $\tA_{(k)}$ denotes the $k$th mode unfolding of $\tA$ (\cite{kolda2009tensor, ragnarsson2012block}) and $\xv^{[i]}$ denotes Kronecker exponentiation (\cite{pickard2023kronecker}). The polynomial representation provided in \cref{eq: tensor unfolded polynomial} allows for the construction of a nonlinear controllability matrix.

\subsection{A Test for Polynomial Controllability}
A Kalman-rank-like test for controllability is developed in \cite{chen2021controllability} where \cref{eq: main equation} is strongly controllable if and only if $rank(\Cv(\tA,\Bv))=n$ where $\Cv(\tA,\Bv)$ is the nonlinear controllability matrix defined
\begin{equation}\label{eq: nonlinear controllability matrix}
    \Cv(\tA,\Bv)=\begin{bmatrix}
        \Bv&\tA_{(k)}\Bv^{k-1}
        & \dots & \tA_{(k)}(\tA_{(k)}(\dots\tA_{(k)}\Bv^{[k-1]})^{[k-1]}\dots )^{[k-1]}
    \end{bmatrix}.
\end{equation}
In practice, the authors of \cite{chen2021controllability} do not explicitly construct $\Cv(\Av,\Bv)$ but rather developed a fast SVD based algorithm to construct a reduced controllability matrix $\Cv_r(\tA,\Bv)$ that will have the same rank; see \cref{alg:1} for details.

Initially proposed for undirected (or really omnidirectional\footnote{Undirected hypergraphs in \cite{chen2021controllability} have dynamics defined so that if there are $k$ vertices in a hyperedge, then information propagates from all sets of $k-1$ vertices to the remaining $k$th vertex. Similar to how an undirected graph has a symmetric adjacency matrix and can be thought of as bidirectional, the dynamics of undirected hypergraphs flow not in bi- or two directions, but in all possible directions indicated by the vertices in the hyperedge. In this sense, undirected hyperedges are omnidirectional.}) hypergraphs, the adjacency tensor $\tA$ is supersymmetric. As a result, the published version of \cref{alg:1} proposed that all unfoldings of $\tA$ are equivalent; however, to consider a generic homogeneous polynomial where directionality is encoded in $\fv$ and asymmetries of $\tA,$ we must enforce that the unfolding $\tA_{(k)}$ is utilized in line with \cref{eq: tensor unfolded polynomial}.
\begin{algorithm}
\caption{\cite{chen2021controllability}}
\label{alg:1}
\begin{algorithmic}[1]
    \REQUIRE $(\tA,\Bv)$
    \STATE $\Av=\tA_{(k)}$
    \STATE{Set $\textbf{C}_r = \textbf{B}$ and $j=0$}\\
    \WHILE{$j < n$}
        \STATE{Compute $\textbf{L}=\textbf{A}(\textbf{C}_r\otimes \textbf{C}_r\otimes \stackrel{k-1}{\cdots} \otimes \textbf{C}_r)$}\\
        \STATE{Set $\textbf{C}_r=\begin{bmatrix} \textbf{C}_r & \textbf{L}\end{bmatrix}$}\\
        \STATE{Compute the economy-size SVD of $\textbf{C}_r$, and remove the zero singular values, i.e., $\textbf{C}_r=\textbf{U}\textbf{S}\textbf{V}^\top$ where $\textbf{S}\in\mathbb{R}^{s\times s}$, and $s$ is the rank of $\textbf{C}_r$}
        \STATE{Set $\textbf{C}_r=\textbf{U}$, and $j=j+1$}
    \ENDWHILE
    \RETURN The reduced controllability matrix $\textbf{C}_r$.
\end{algorithmic}
\end{algorithm}

\subsection{Limitations of Strong Controllability}


As it was argued by \cite{lin1974structural}, even for linear systems $(\Av,\Bv),$ the system identification problem of identifying the parameter entries of $\Av$ and $\Bv$ can be determined up to numerical or more often experimental precision, which is problematic when considering that the set of all completely controllable pairs $(\Av,\Bv)$ is open and dense.

\begin{thm}[\cite{lee1967foundations}]\label{thm: Av Bv is open and dense}
    For any pair $(\Av,\Bv)$ that is not completely controllable and any $\varepsilon\in\R,$ there exist a completely controllable pair $(\Av',\Bv')$ where $\|\Av-\Av'\|<\varepsilon$ and $\|\Bv-\Bv'\|<\varepsilon.$
\end{thm}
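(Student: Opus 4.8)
The plan is to recognize \Cref{thm: Av Bv is open and dense} as the density half of the classical Lee--Markus statement that the completely controllable pairs form an open and dense subset of $\R^{n\times n}\times\R^{n\times m}$, and to establish it through the algebraic structure of the Kalman rank condition. First I would recall that $(\Av,\Bv)$ is completely controllable if and only if its controllability matrix $\mathcal{C}(\Av,\Bv)=\begin{bmatrix}\Bv&\Av\Bv&\cdots&\Av^{n-1}\Bv\end{bmatrix}$ has full rank $n$. The key observation is that every entry of $\mathcal{C}(\Av,\Bv)$ is a polynomial in the $n^2+nm$ entries of $\Av$ and $\Bv$, so each of the finitely many $n\times n$ minors $p_1,\dots,p_N$ of $\mathcal{C}(\Av,\Bv)$ is itself a polynomial in those entries.

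With this setup, the controllable pairs are exactly the set $U=\{(\Av,\Bv):p_i(\Av,\Bv)\neq 0\text{ for some }i\}$, the complement of the common zero locus $V=\bigcap_{i}\{p_i=0\}$. Since the $p_i$ are continuous, $U$ is open; the substance of the claim is that $U$ is dense, equivalently that $V$ has empty interior. The next step is to show that the $p_i$ are not all identically zero as polynomials. This I would do by exhibiting a single controllable pair: taking $\Av$ in single-input companion (cyclic) form with $\Bv=\ev_n$ makes $\mathcal{C}(\Av,\Bv)$ anti-triangular with unit anti-diagonal, hence nonsingular, so at least one minor $p_i$ is a nonzero polynomial.

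The crux is then the elementary but essential fact that a polynomial which is not identically zero cannot vanish on any nonempty open subset of $\R^{n^2+nm}$; I would prove this by induction on the number of variables, reducing to the statement that a nonzero univariate polynomial has finitely many roots. Applying this to the nonzero $p_i$ found above shows that $V\subseteq\{p_i=0\}$ has empty interior, so $U$ is dense. To finish, given any non-controllable $(\Av,\Bv)$ and any $\varepsilon>0$, density guarantees a point $(\Av',\Bv')\in U$ inside the $\varepsilon$-ball around $(\Av,\Bv)$; because all norms on a finite-dimensional space are equivalent, this yields $\|\Av-\Av'\|<\varepsilon$ and $\|\Bv-\Bv'\|<\varepsilon$ with $(\Av',\Bv')$ completely controllable, as required.

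I expect the main obstacle to be expository rather than mathematical: the only genuine content beyond bookkeeping is the polynomial non-vanishing lemma, and some care is needed to connect ``$V$ has empty interior'' to the quantified $\varepsilon$-perturbation asserted in the theorem. A constructive alternative, which I would mention as a remark, is to perturb along a single line $(\Av+t\,\Delta\Av,\,\Bv+t\,\Delta\Bv)$ and observe that the chosen minor becomes a nonzero univariate polynomial in $t$, hence nonzero for arbitrarily small $t\neq 0$; this sidesteps the measure-zero argument but still rests on exhibiting a direction in which some minor does not vanish identically, i.e.\ on the same controllable example used above.
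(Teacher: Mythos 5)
Your proposal is correct: this is the classical open-and-dense argument via the Kalman rank condition, polynomial minors of the controllability matrix, and the fact that a nonzero polynomial cannot vanish on a nonempty open set. Note that the paper itself offers no proof of this statement---it is cited directly from \cite{lee1967foundations} (Theorem 11, p.~100), and the argument given there is essentially the one you reconstructed, so there is no divergence to report; the only cosmetic gap is that for $m>1$ you should pad your single-input example $\Bv=\ev_n$ with zero columns so the exhibited nonzero minor lives among the $n\times n$ minors of the full $n\times nm$ controllability matrix.
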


When we consider our limitations in determining the parameters of $(\Av,\Bv)$ in the context of \cref{thm: Av Bv is open and dense}, there will always be a completely controllable pair arbitrarily close to the system $(\Av,\Bv)$ that is identified. This problem exist equivalently for pairs $(\tA,\Bv).$

\begin{prop}\label{prop: open dense multlilinear}
    The set of odd homogeneous polynomial controllable systems defined by the pairs $(\tA,\Bv)$ is open and dense: For any pair $(\tA,\Bv)$ that is not strongly controllable and any $\varepsilon\in\R,$ there exist a strongly controllable pair $(\tA',\Bv')$ such that $\|\tA-\tA'\|<\varepsilon$ and $\|\Bv-\Bv'\|<\varepsilon$.
\end{prop}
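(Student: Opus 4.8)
The plan is to mirror the proof of the linear analogue, \Cref{thm: Av Bv is open and dense}, replacing the Kalman controllability matrix by the nonlinear controllability matrix $\Cv(\tA,\Bv)$ of \cref{eq: nonlinear controllability matrix} and invoking the rank test of \cite{chen2021controllability}: the pair $(\tA,\Bv)$ is strongly controllable if and only if $\text{rank}\,\Cv(\tA,\Bv)=n$. The first fact I would record is that $\Cv(\tA,\Bv)$ is a \emph{finite} matrix each of whose entries is a polynomial in the entries of $\tA$ and $\Bv$. Indeed, each block of \cref{eq: nonlinear controllability matrix} is obtained from the preceding blocks by Kronecker products and a single contraction against the unfolding $\tA_{(k)}$, and both operations are polynomial in the parameters. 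Writing $N:=nm+n^{k}$ for the number of real parameters of the pair, this realizes the controllability condition as an algebraic condition on a point of $\R^{N}$.

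With this in hand, the openness half is immediate. Rank $n$ means that at least one $n\times n$ minor of $\Cv(\tA,\Bv)$ is nonzero, so if I set $P(\tA,\Bv)$ equal to the sum of squares of all $n\times n$ minors, then $(\tA,\Bv)$ is strongly controllable exactly when $P(\tA,\Bv)\neq 0$. Since $P$ is a polynomial, hence continuous, the set $\{P\neq 0\}$ is open in $\R^{N}$, establishing that the strongly controllable pairs form an open set.

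For density I would argue that the non-controllable pairs, which constitute the real zero locus $\{P=0\}$, form a nowhere-dense set. Provided $P$ is not the identically-zero polynomial, $\{P=0\}$ is a proper real algebraic subvariety of $\R^{N}$ and therefore has empty interior (indeed Lebesgue measure zero); its complement, the strongly controllable pairs, is then dense. Consequently, given any non-controllable $(\tA,\Bv)$ and any $\varepsilon>0$, the $\varepsilon$-ball around $(\tA,\Bv)$ meets the controllable set, furnishing the desired $(\tA',\Bv')$ with $\|\tA-\tA'\|<\varepsilon$ and $\|\Bv-\Bv'\|<\varepsilon$.

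The hard part will be verifying that $P\not\equiv 0$, that is, that at least one strongly controllable pair of the prescribed shape $(n,m,k)$ exists; without this the zero locus could be all of $\R^{N}$ and the density claim would collapse. I would establish non-emptiness by exhibiting an explicit witness: for $k=2$ the field $\tA\xv^{k-1}$ is linear and a companion pair is controllable, while for general even $k$ I would construct a tensor $\tA$ and a vector $\Bv$ whose iterated brackets span $\R^{n}$ at the origin, verifying the criterion through \Cref{thm: jurdjevic and kupka} (equivalently, that the corresponding $\Cv$ attains rank $n$). I expect this construction, rather than the topological genericity argument, to be the only genuinely technical step.
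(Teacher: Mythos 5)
Your proposal is correct and takes essentially the same route as the paper: the paper's entire proof is a one-line reference to Theorem 11 (p.~100) of \cite{lee1967foundations} with the linear controllability matrix exchanged for the nonlinear controllability matrix $\Cv(\tA,\Bv)$, which is precisely the program you carry out. Your write-up is in fact more complete than the paper's, since you make the openness/density argument explicit (minors of $\Cv$ as polynomials in the parameters, complement of a proper algebraic variety) and you correctly flag the one genuinely nontrivial step that the citation-style proof glosses over, namely exhibiting a witness pair of each shape $(n,m,k)$ so that the minor polynomial is not identically zero.
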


\begin{proof}
    This proof follows a similar structure to the original proof that $(\Av,\Bv)$ is open and dense presented on page 100 in Theorem 11 of \cite{lee1967foundations} with the notable exception that we exchange the linear controllability matrix for the nonlinear controllability matrix $\Cv(\Av,\Bv).$
\end{proof}


The limitation of \cref{prop: open dense multlilinear} motivates the definition of structural controllability for odd homogeneous polynomial systems.

\begin{defn}
    An odd homogenenous polynomial represented by the pairs $(\fv,\Bv)$ or $(\tA,\Bv)$ is structurally controllable if and only if there exist a strongly controllable pair $(\Hat{\tA},\Hat{\Bv}).$
\end{defn}

\subsection{Hypergraph Preliminaries}
Hypergraphs provide a convienient framework for representing the sparsity structure $(\tA,\Bv)$ and were the initial motivation of \cref{alg:1}. A hypergraph $\h=\{\V,\E\}$ is a finite set of vertices $\V$ together with a set of hyperedges where each hyperedge $\e\in\E$ is a subset of vertices i.e. $e\subseteq\mathcal{P}(\V)\backslash\{\emptyset\},$ where $\mathcal{P}(\V)$ denotes the power set of $\V$ and $\backslash$ is the set difference. A hypergraph is $k$-uniform if all hyperedges $e$ contain $k$ vertices. The adjacency tensor is the higher order analogue of an adjacency matrix which represents the structure of a hypergraph.

\begin{defn}[\citet{cooper2012spectra}]
    Let $\h=\{\V,\E\}$ be a $k$-uniform hypergraph with $n$ nodes. The \textit{adjacency tensor} $\textsf{A}\in\mathbb{R}^{n\times n\times \dots\times n}$ of $\h$, which is a $k$-th order $n$-dimensional supersymmetric tensor, is defined as
    \begin{equation}\label{eq:98}
    \textsf{A}_{j_1j_2\dots j_k} = \begin{cases}
            \frac{1}{(k-1)!} & \text{if $\{j_1,j_2,\dots,j_k\}\in \E$}\\
            0 & \text{otherwise}
        \end{cases}.
    \end{equation}
\end{defn}

The homogeneous polynomial of the adjacency tensor can be fit to any homogeneous polynomial $\fv(\xv)$ by modifying the values of the coefficients or tensor entries in \cref{eq:98}, and conditions and fast algorithms have been developed for the observability and controllability of hypergraph dynamics with linear inputs and outputs.

\begin{defn}[\cite{chen2021controllability, pickard2023observability}]
    Given a $k$-uniform hypergraph $\h$ with $n$ nodes, the dynamics of $\h$ with control inputs can be represented by 
    \begin{equation}\label{eq: HG dynamics}
    \begin{cases}
        \dot{\xv} & = \tA\xv^{k-1} + \sum_{j=1}^m\textbf{b}_ju_j\\
        \yv& = \Cv\xv
    \end{cases}
    \end{equation}
    where, $\textsf{A}\in\mathbb{R}^{n\times n\times \dots \times n}$ is the adjacency tensor of \textsf{G}, and $\textbf{B}=\begin{bmatrix}\textbf{b}_1 & \textbf{b}_2 & \dots & \textbf{b}_m\end{bmatrix}\in\mathbb{R}^{n\times m}$ is the control matrix.
\end{defn}

We will utilize this hypergraph framework to develop conditions for structural control in \cref{sec: main result}. In the following section, we relax some of the above requirements and develop new structures to discuss directed hypergraphs.


\section{Directed Hypergraphs}\label{sec: directed hypergraphs}

In this section, we describe one method through which the hypergraph representation of an odd, homogeneous polynomial $\fv$ can be constructed. In order to apply \cref{thm: jurdjevic and kupka} or \cref{alg:1} to determine strong controllability of \cref{eq: main equation}, we require odd, homogeneous polynomial dynamics. This means that each term appearing in the right hand side of $\dot{\xv}=\fv(\xv)$ must be the product of an odd number of entries in $\xv$ (these will be the tail edges), and to accommodate tensor representation the number of entries from $\xv$ must be uniform accross all terms in $\fv(\xv)$; however, there is no restriction from \cref{thm: jurdjevic and kupka} as to how many entries of $\dot{\xv}$ that a given hypertail can appear in (entries of $\dot{\xv}$ with identical tails will be the heads).

\subsection{Definition and Tensor Representation}
To develop a condition for structural control, we assign a directed hypergraph structure to the pair $(\tA,\Bv)$. Given a polynomial $\dot{\xv}=\fv(\xv),$ the dynamics of each term $\dot{\xv}_i$ can be written as
\begin{equation*}
    \dot{\xv}_i=\sum_{XX\text{ given }\xv_i}\ \prod_{YY\text{ given }XX}\xv_j,
\end{equation*}
where $XX$ and $YY$ are conditional expressions. 
In an undirected/omnidirectional hypergraph, the conditions $XX$ and $YY$ are
\begin{equation}
    \dot{\xv}=\tA\xv^{k-1}+\Bv\uv\Longrightarrow\text{   }\dot{\xv}_i=\sum_{e\in\E|v_i\in e}\prod_{j|v_j\in e,i\neq j}\xv_j,
\end{equation}
or when the control inputs are treated separately, the system is written
\begin{equation}\label{eq: hg dynamics by directed hg structure}
    \dot{\xv}_i=\overbrace{\sum_{e\in\E|v_i\in e}\prod_{j|v_j\in e,i\neq j}\xv_j}^{\tA\xv^{k-1}}+\Bv_{i:}\uv.
\end{equation}
Already, there is a notion of directionality where ``information" or ``forcing" propagates from $\xv_j$ to $\xv_i.$

In a directed \textit{graph}, we would say that $\xv_i$ is in the \textit{edge} head and $\xv_j$ is in the \textit{edge} tail. The condition that $\xv_i$ reside in the head, modifies which hyperedges are summed over or which hyperedges influence $\xv_i$ with the $XX$ conditional, and the condition that $\xv_j$ is in the tail modifies which vertices are considered in the product via the $YY$ conditional. This can be extended to a directed hypergraph as
\begin{equation}\label{eq: directed hypergraph dynamics definition}
    \dot{\xv}=\tA\xv^{k-1}+\Bv\uv
    \Longleftrightarrow    \dot{\xv}_i=\overbrace{\sum_{e\in\E|v_i\in e_h}\prod_{j|v_j\in e_t}\xv_j}^{\tA\xv^{k-1}}+\Bv_{i:}\uv,
\end{equation}
where $e^t$ and $e^h$ denote hyperedge tails and heads respectively. From \cref{eq: directed hypergraph dynamics definition}, we can explicitly define the hyperedge heads $e^h$ and tails $e^t$ to construct a directed hypergraph representation of $(\tA,\Bv).$


\begin{defn}\label{def: HG of tA Bv}
    Given a polysystem $(\tA,\Bv),$ there the directed hypergraph $\h(\tA,\Bv)=\{\V,\E\}$ is defined:
    \begin{itemize}
        \item $\V=\{\overbrace{v_1,\dots,v_n}^{\text{system vertices}},\overbrace{v_{n+1},\dots,v_{n+m}}^{\text{control vertices}}\}$
        \item $\E=\{(e^t_1,e^h_1),\dots,(e^t_l,e^h_l)\}$ where each $e^t_i\subseteq \V$ and each $e^h_i\subset \V$
        \item Each hypertail is unique (\ie  if $e^t_i=e^t_j$ then $i=j$)
        \item Control vertices are not in the hyperedge heads (\ie  $e^h\cap\{v_{n+1},\dots,v_{n+m}\}=\emptyset$)
    \end{itemize}
\end{defn}
Several modifications are made between directed and undirected/omnidirectional hypergraphs:
\begin{itemize}
    \item Direction: each hyperedge can be explicitly split into hypertails and hyperheads indicating how a walk, flow, or control input will propagate along the hypergraph.
    \item Nonuniformity: to apply \cref{thm: jurdjevic and kupka}, only the hypertails are fixed sizes. The heads can vary in size and therefore so can the size of any hyperedge.
\end{itemize}
Rather than constructing a new adjacency tensor for $\h(\tA,\Bv),$ the set of all tensors with the sparsity $\hat{\tA}$ will be the only objects required to verify if the system is structurally controllable.


\subsection{Hypergraph Structures}
To extend \cref{thm: lin 1974}, we define accessible vertices and dilations on hypergraphs, which a few definitions; the following definitions will be utilized in \cref{sec: main result}.

In a graph, a set of vertices $S\subset\V$ is dilated if the in-neighborhood of $S,$ denoted $\Gamma(S)$ is smaller than $S$ \ie if $|S|>\Gamma(S),$ then $S$ is dilated (\cite{lin1974structural}). For hypergraphs, we define a dilation not by the size of the neighborhood of $S$ but by the number of hyperedges directed into $S$.

\begin{defn}[Hyperedge Dialation\footnote{I have not seen definitions of hypergraph dilations in the literature yet.}]
    Given a directed hypergraph $\h=\{\V,\E\}$ where each directed hyperedge is denoted $(e^t,e^h)\in\E$, a set of vertices $\{v_1,\dots,v_s\}$ is dilated if the set of heads directed at at least one vertex in  $\{v_1,\dots,v_s\}$ is smaller than $s$.
\end{defn}

This definition is convenient because the existince of such a dilation can be verified as a maximum matching problem in the star expansion of a directed hypergraph; see \cref{sec: algorithms}.

For a traditional graph or 2-uniform hypergraph, a walk is a sequence of edges $e_1,\dots,e_l$ where the edges are aligned $e_i^h=e_{i+1}^t.$ Graph walks have two distinct properties:
\begin{itemize}
    \item \textit{Immediately} before $e_i$ is added to the walk, the vertices in $e_i^t$ are visited.
    \item \textit{All} vertices in the tail $e^t_{i+1}$ are visited before $e_i$ is included in the walk.
\end{itemize}

Typically, hypergraph walks focus on the first condition, requiring that nearby vertices and hyperedges are visited sequentially. Several varieties of hypergraph walks exist (\citet{gallo1993directed,ausiello2017directed, aksoy2020hypernetwork}). 

Following the second condition, we take an alternative perspective on hypergraph walks: we impose the strict requirement that all vertices in the tail $e^t_i$ are visited at some point in the walk prior to hyperedge $e_i$ being visited. This relaxes the condition of visiting vertices in a consecutive order.

\begin{defn}[Walk \footnote{There are many definitions of hypergraph walks in the literature; however, I have not found a definition equivalent to this one yet.}]
    A hypergraph walk beginning with the sets of visited vertices $\{\{v_1\},\dots,\{v_a\}\}$ is a sequence of hyperedges $w=(e_1,\dots,e_l)$ that obeys the following conditions:
    \begin{itemize}
        \item A set of visited sets of vertices is maintained throughout the walk, beginning with $V_0=\{\{v_1\},\dots,\{v_a\}\}$
        \item Prior to $e_i$ appearing in the walk sequence, all vertices in the tail  $e_i^t$ are found in sets of visited vertices that contain no vertices that are not in the tail of $e_i$, i.e. $\exists\ h_1,\dots,h_l\in V_{i-1}$ such that $h_1\cup\dots\cup h_l=e^t_i$ and $h_j\backslash e^t_i=\emptyset\ \forall j,$ where $\backslash$ denotes the set difference.
        \item When a new hyperedge $e_i$ is added to the walk sequence $w$, vertices in the head $e_i^h$ are counted as visited and added as a new set to the list of sets of vertices that may be used to walk on new hyperedges i.e. $V_i=V_{i-1}\cup e_i^h.$
    \end{itemize}
    For a walk $w$ of length $l,$ the vertices $e^h_l$ are the final vertices of $w.$
\end{defn}

This definition is convenient because it is simple and fast to perform a breadth first search of a directed hypergraph using this type of walk. Furthermore, it allows us to determine the set of hyperheads accessible from a given set of visited vertices $V_i$ as the span of
\begin{equation*}
    \hat{\tA} \times_1 v_1 \times_2 \ldots \times_{k-1} v_{k-1}
\end{equation*}
where $v_i \in V_i$. Notably, this expression bears a helpful resemblance to \cref{eq: main equation}.

The notion of an accessible vertex for hypergraph walks parallels that of a walk on a graph.

\begin{defn}[Accessible Vertices]
    Given a set of vertices $V_0=\{\{v_1\},\dots,\{v_a\}\},$ a vertex $v$ is accessible if there exist a walk beginning at $V_0$ that visits $v.$
\end{defn}

Unique to hypergraph walks, some vertices are accessible only in groups. For instance, if $v_1$ and $v_2$ are each only in the head of one hyperedge and $v_1$ and $v_2$ are in the same hyperedge, then all walks must visit $v_1$ and $v_2$ together. We refer to a vertex $v$ as individually accessible if there exist a set of walks that will isolate $v.$

\begin{defn}[Individually Accessible Vertex]
    A vertex $v$ is individually accessible if there exist a set of walks $w_1,\dots,w_k$ such that $\{v\}$ is the result of a series of set unions and differences between the final vertices of walks $w_1,\dots,w_k$.
\end{defn}

With these definitions, we are equipped to determine what sparsity structure of $(\tA,\Bv)$ will render $\Cv(\tA,\Bv)$ full rank and to verify \cref{thm: structural control of polynomials}.

\section{Structural Controllability for Polysystems}\label{sec: main result}
In this section we return to and prove our main result of \cref{thm: structural control of polynomials}. We show that the presence of an inaccessible vertex or a hyperedge dilation is both sufficient and necessary for the pair $(\tA,\Bv)$ to be structurally uncontrollable. Our argument is based upon the rank of the nonlinear controllability matrix $\Cv(\tA,\Bv)$ where a pair $(\tA,\Bv)$ is structurally uncontrollable if there exist no pair $(\hat{\tA},\hat{\Bv})$ with a full rank controllability matrix.

\subsection{Sufficient}
Here we show that if $\h(\tA,\Bv)$ contain either a inaccessible vertex or a hyperedge dilation then there exist no pair $(\hat{\tA},\hat{\Bv})$ with a full rank controllability matrix.



\begin{prop}\label{prop: inaccessible is not SC}
    In the hypergraph $\h(\tA,\Bv),$ if there exist a vertex in $\{1,\dots,n\}$ that is not accessible from a walk beginning at $\{\{n+1\},\dots,\{n+m\}\},$ then the pair $(\tA,\Bv)$ is not structurally controllable.
\end{prop}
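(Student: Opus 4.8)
The plan is to show that an inaccessible system vertex forces a structurally zero row in the nonlinear controllability matrix, so that $\mathrm{rank}\,\Cv(\hat{\tA},\hat{\Bv}) < n$ for \emph{every} realization $(\hat{\tA},\hat{\Bv})$ sharing the sparsity of $(\tA,\Bv)$; by the definition of structural controllability and the rank test of \cite{chen2021controllability}, this is exactly the statement that $(\tA,\Bv)$ is structurally uncontrollable. Let $R\subseteq\{1,\dots,n\}$ denote the set of system vertices accessible from the control vertices $\{n+1,\dots,n+m\}$, and let $v\notin R$ be the hypothesized inaccessible vertex. I want to establish the invariant that the support of every column of $\Cv(\hat{\tA},\hat{\Bv})$ is contained in $R$, so that the $v$-th row vanishes identically.

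First I would set up an induction along the block structure of \cref{eq: nonlinear controllability matrix} (equivalently, along the iterations of \cref{alg:1}). The base case is the block $\hat{\Bv}$, whose columns are supported on the heads of the control hyperedges; these heads are visited by one-step walks from the control vertices, so their supports lie in $R$. For the inductive step a new block is the contraction $\hat{\tA}_{(k)}(\Cv_r\otimes\cdots\otimes\Cv_r)$, and I would inspect an arbitrary entry in row $i$: it is a sum, over hyperedges whose head contains $v_i$, of products of entries drawn from the current columns of $\Cv_r$, one factor per tail slot. A nonzero contribution to row $i$ therefore requires a hyperedge into $v_i$ whose \emph{entire} tail is covered by the supports of the current columns, which by the inductive hypothesis lie in $R$.

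The crux is then to match this ``tail covered by $R$'' condition to the firing rule of an accessible-vertex walk: if every tail vertex of some hyperedge into $v_i$ already lies in $R$, then appending that hyperedge to a walk witnesses $v_i\in R$ as well. Consequently, when $i\notin R$ no hyperedge into $v_i$ can have its tail covered by $R$, so every product in the row-$i$ sum carries a factor pulled from a coordinate outside the current supports, and the entry is identically zero regardless of the values assigned to $\hat{\tA}$ and $\hat{\Bv}$. This closes the induction, giving $\mathrm{supp}(\text{col})\subseteq R$ for all columns; in particular the row indexed by $v\notin R$ is the zero row, so $\mathrm{rank}\,\Cv(\hat{\tA},\hat{\Bv})\le |R|\le n-1$ for every realization, and no strongly controllable pair of the prescribed sparsity exists.

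I expect the main obstacle to be the bookkeeping in the inductive step, specifically showing the support bound is \emph{structural} — holding for all coefficient values at once — rather than merely generic. Because the contraction Kronecker-mixes several columns and the orthonormalization in \cref{alg:1} replaces columns by linear combinations, I must verify that these operations never enlarge the union of supports beyond $R$ and never manufacture a nonzero entry in coordinate $v$. The delicate point is exactly the interaction with how reached vertices are bundled into walk sets: I must confirm that even when accessible vertices are only ever reached in groups, a hyperedge directed into an inaccessible vertex still cannot fire, so that coordinate $v$ receives no surviving monomial and the zero-row conclusion is immune to cancellation-free algebra.
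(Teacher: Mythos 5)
Your proposal follows the same route as the paper's own proof: propagate a support invariant along the blocks of the nonlinear controllability matrix and conclude that an inaccessible vertex forces a structurally zero row, so that $\mathrm{rank}(\Cv(\hat{\tA},\hat{\Bv}))<n$ for every realization of the sparsity pattern. Where the paper compresses this into the sets $\mathscr{W}_i$ and simply \emph{asserts} that an inaccessible vertex $v$ never acquires a nonzero $v$-th entry, you make the induction explicit, treat the Kronecker mixing of columns, and correctly observe that the SVD steps of \cref{alg:1} only take linear combinations and so can neither enlarge the union of supports nor create a nonzero coordinate $v$.

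However, the step you defer at the end --- confirming that ``even when accessible vertices are only ever reached in groups, a hyperedge directed into an inaccessible vertex still cannot fire'' --- is not bookkeeping; it is the crux, and under the paper's stated walk definition it fails. The paper's walk may only fire a hyperedge $e$ when its tail is covered by visited sets $h_j\in V_{i-1}$ satisfying $h_j\setminus e^t=\emptyset$, i.e.\ the covering bundles must themselves be subsets of the tail. The tensor contraction obeys a laxer rule: a monomial in row $i$ survives as soon as each tail slot is matched to \emph{some} column whose support contains that vertex, regardless of how visited vertices are bundled. These rules genuinely differ. Concretely, with $k=4$ take system vertices $a,b,c,c',d$, control heads $\{a,b\},\{b\},\{c\},\{c'\}$, and a single system hyperedge with tail $\{a,c,c'\}$ and head $\{d\}$; write $\hat{\bv}_1=(\alpha,\beta,0,0,0)^\top$, $\hat{\bv}_3=(0,0,\gamma,0,0)^\top$, $\hat{\bv}_4=(0,0,0,\gamma',0)^\top$. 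Under the paper's definition $d$ is inaccessible, since the only visited set containing $a$ is $\{a,b\}$, which is not a subset of the tail; yet the column $\hat{\tA}_{(4)}(\hat{\bv}_1\otimes\hat{\bv}_3\otimes\hat{\bv}_4)$ of $\Cv$ has $d$-entry $\hat{\tA}_{d,a,c,c'}\,\alpha\gamma\gamma'\neq 0$, so $\mathrm{rank}(\Cv)=5=n$ and this realization is strongly, hence structurally, controllable. Thus your inductive invariant breaks exactly at the inference ``tail $\subseteq R$ implies the hyperedge can be appended to a walk,'' and the statement itself is false under the strict reading of accessibility. The repair is to define $R$ by the lax firing rule --- a hyperedge fires once all of its tail vertices have been visited, in any grouping --- which is the notion that actually matches the algebra, is what the paper's own BFS (\cref{alg: BFS}) computes with its flat accessible set $A$, and is what the paper's proof tacitly assumes when it asserts the support claim for $\mathscr{W}_i$. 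With that notion your induction closes with no further work; you should state explicitly that this, and not the paper's literal walk definition, is the sense of ``accessible'' for which you prove the proposition.
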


\begin{proof}
    If there exist a inaccessible vertex $v$, from the control vertices, then there is no walk beginning at the control vertices which will ever visit $v.$ Denote by $\mathscr{W}_1$ the set of vertices that are accessible from the control vertices such that $\mathscr{W}_1$ spans the column vectors of $\Bv.$ 
    The set of vertices accessible after the $i$th step in the walk is the span of $\mathscr{W}_i$ where 
    \begin{equation}
        \mathscr{W}_i=\mathscr{W}_{i-1}\cup\{\hat{\tA}\times_2\ev_{i_1}\times\dots\times_{k}\ev_{i_{k-1}}\text{ where }\ev_i\in \mathscr{W}_{i-1}\}.
    \end{equation}
    If $v$ is inaccessible, then there exist no $i$ such that $\mathscr{W}_i$ contains a vector whose $i$th entry is nonzero. In the context of the controllability matrix, we observe that $\Cv(\tA,\Bv)$ is the matrix whose columns are generated following a similar procedure as $\mathscr{W}_i$ for $i\geq n,$ so if there exist a inaccessible vertex $v$ such that the $v$th entry of all vectors in $\mathscr{W}$ is always 0, similarally, the $v$th row of $\Cv(\tA,\Bv)$ will be all zeros and $rank(\Cv(\tA,\Bv))<n.$ 
\end{proof}

\begin{prop}\label{prop: non individually accessible is not SC}
    In the hypergraph $\h(\tA,\Bv)$, if there exist a vertex $v_i$ that is not individually accessible, then the pair $\Cv$ is not structurally controllable.
\end{prop}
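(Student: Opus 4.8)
The plan is to mirror the structure of the preceding proof of \cref{prop: inaccessible is not SC}, working with the reachable sets $\mathscr{W}_i$ and the column space of $\Cv(\hat{\tA},\hat{\Bv})$, but to replace the ``zero row'' conclusion by a ``linearly dependent rows'' conclusion. First I would establish a structural reduction that exploits the uniqueness of hypertails from \cref{def: HG of tA Bv}. Because the tail $e^t$ of each hyperedge occurs in exactly one hyperedge, any contraction of $\hat{\tA}$ that activates the tail $e^t$ returns a scalar multiple of the single fixed \emph{head vector} $\hat{a}^{(e^t)} := (\hat{\tA}_{i,e^t})_{i\in e^h}$, supported on $e^h$; the scalar depends on the vectors fed into the tail positions, but the direction on $e^h$ does not. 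Consequently the column space of $\Cv(\hat{\tA},\hat{\Bv})$ is contained in the span of the control columns $\hat{\bv}_1,\dots,\hat{\bv}_m$ together with the head vectors $\{\hat{a}^{(e^t)}\}$, and the reachable sets $\mathscr{W}_i$ record precisely which of these directions have been generated after $i$ steps.

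Next I would translate individual accessibility into a statement about these head vectors. A collection of walks produces, as its final vertices, a family of head sets, and forming unions and differences of these sets corresponds, on the linear side, to taking linear combinations of the associated reachable vectors and cancelling shared coordinates. The claim to prove is that the coordinate direction $\ev_v$ can lie in the column space of $\Cv(\hat{\tA},\hat{\Bv})$ only if $\{v\}$ can be isolated by such set operations, i.e. only if $v$ is individually accessible. Arguing the contrapositive, if $v$ is not individually accessible then there is a companion vertex $w$ that is inseparable from $v$ in the set algebra generated by the final vertex sets: every achievable support contains both $v$ and $w$ or neither, and $v$ is only ever forced together with $w$ through head vectors in which the $v$ and $w$ entries appear in the fixed proportion dictated by $\hat{a}^{(e^t)}$.

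From this inseparability I would deduce a parameter-independent linear dependency among the rows of $\Cv(\hat{\tA},\hat{\Bv})$: the $v$-th row is a fixed linear combination of the rows indexed by the remaining vertices of $v$'s bundle, since in every reachable vector the $v$-coordinate is slaved to those coordinates in the proportions fixed by the tensor entries. This dependency holds for \emph{every} choice of $(\hat{\tA},\hat{\Bv})$ with the prescribed sparsity, so $rank(\Cv(\hat{\tA},\hat{\Bv}))<n$ for all such pairs and $(\tA,\Bv)$ is structurally uncontrollable. The main obstacle is exactly the rigor of this last deduction. A priori, two distinct hypertails whose heads both meet $\{v,w\}$ contribute the generically independent two-vectors $(\hat{\tA}_{v,e^t_1},\hat{\tA}_{w,e^t_1})$ and $(\hat{\tA}_{v,e^t_2},\hat{\tA}_{w,e^t_2})$, which would break the proportionality and restore the ability to isolate $\ev_v$. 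The delicate step is therefore to show that the failure of individual accessibility genuinely forbids enough independent activations of $v$ to force a single, parameter-independent row relation; I expect this requires refining the walk bookkeeping to track distinct tails rather than merely their head sets, together with a genericity argument ruling out accidental cancellations that would otherwise recover full rank.
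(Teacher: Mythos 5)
Your proposal follows the same route as the paper's own proof: pick a companion vertex $v_j$ that is inseparable from $v_i$, claim that every reachable vector has its $i$th and $j$th coordinates in a fixed proportion, and conclude a parameter\-/independent dependence between rows $i$ and $j$ of $\Cv(\hat{\tA},\hat{\Bv})$. The paper's proof consists of exactly this claim, asserted in one sentence ("it follows that the span of $\mathscr{W}$ contains vectors\dots linearly dependent in the $i$th and $j$th position") with no justification. So the step you single out as the main obstacle is not something you failed to reconstruct; it is the step the paper never proves. Moreover, your worry about it is not a technicality that refined bookkeeping plus genericity could dissolve --- it is fatal to the claim as stated, because genericity works \emph{against} the claim: generic coefficients make the head vectors of two distinct tails independent, which makes the two rows independent.

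Concretely, take $n=5$, $m=3$, cubic dynamics, controls into $v_1,v_2,v_3$, and two hyperedges $e_1=(\{v_1,v_2,v_3\},\{v_4,v_5\})$ and $e_2=(\{v_3,v_4,v_5\},\{v_4,v_5\})$, i.e.
\begin{equation*}
    \dot{\xv}_1=u_1,\qquad \dot{\xv}_2=u_2,\qquad \dot{\xv}_3=u_3,\qquad
    \dot{\xv}_4=a_1\xv_1\xv_2\xv_3+a_2\xv_3\xv_4\xv_5,\qquad
    \dot{\xv}_5=b_1\xv_1\xv_2\xv_3+b_2\xv_3\xv_4\xv_5.
\end{equation*}
Every walk from the control vertices ends with final vertex set $\{v_1\}$, $\{v_2\}$, $\{v_3\}$, or $\{v_4,v_5\}$, so no sequence of unions and differences isolates $v_4$ or $v_5$: both fail to be individually accessible, and the proposition would declare the pair structurally uncontrollable. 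Yet the iterated Lie brackets $\cv:=[\bv_3,[\bv_2,[\bv_1,\fv]]]=(0,0,0,a_1,b_1)^\top$ and $[\cv,[\bv_3,[\cv,\fv]]]=2a_1b_1\,(0,0,0,a_2,b_2)^\top$ are linearly independent whenever $a_1b_1\neq 0$ and $a_1b_2\neq a_2b_1$, so by \cref{thm: jurdjevic and kupka} a generic choice of coefficients is strongly controllable, hence the pair \emph{is} structurally controllable; equivalently, rows $4$ and $5$ of $\Cv(\hat{\tA},\hat{\Bv})$ are generically independent, not dependent. This is precisely your configuration of two distinct hypertails feeding the same pair of head vertices, and it shows that the set-algebra notion of individual accessibility is too weak to imply rank deficiency (it also exposes the same tension in \cref{prop: dilaiton is not SC}, whichever way one counts "heads" there). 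The viable repair is the direction your "track distinct tails" remark points to, but without any genericity argument: a parameter-independent row relation between $i$ and $j$ is forced exactly when \emph{at most one} hypertail (and no separate control column) is directed into $\{v_i,v_j\}$, for then every reachable vector's $(i,j)$-component is a multiple of the single head vector $(\hat{\tA}_{i,e^t},\hat{\tA}_{j,e^t})$. Stated with that hyperedge-counting condition, your argument --- and the paper's --- goes through; stated with the paper's definition of individual accessibility, both carry the same unfixable gap.
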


\begin{proof}
    Suppose $v_i$ is accessible but not individually accessible, and let $v_j$ be the vertex that is always visited in conjunction with $v_i$ such that $\{v_i,v_j\}$ is accessible by neither $\{v_i\}$ nor $\{v_j\}$ are individually accessible. Then, it follows that the span of $\mathscr{W}$ contains vectors with nonzero entries that are linearly dependent in the $i$th and $j$th position. 
    Then, since $\Cv(\tA,\Bv)$ is constructed such that its columns span a similar set of vectors at $\mathscr{W},$ there exist a linear dependence between the $i$th and $j$th rows of $\Cv(\tA,\Bv)$, so $rank(\Cv(\tA,\Bv))<n,$ and $(\tA,\Bv)$ is not structurally controllable.
\end{proof}

\begin{prop}\label{prop: dilaiton is not SC}
    There exist a pair of accessible vertices $\{v_1,v_2\}$ where $v_1$ and $v_2$ are not individually accessible if and only if a hyperedge dilation occurs.
\end{prop}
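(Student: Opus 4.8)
The plan is to recast both sides of the equivalence as statements about a single bipartite matching problem and then invoke Hall's theorem, exactly as the remark following the dilation definition anticipates. First I would build the bipartite \emph{head-incidence} graph $B$ whose left class is the set of accessible system vertices, whose right class is the hyperedge set $\E$, and in which a vertex $v$ is joined to a hyperedge $e$ precisely when $v\in e^h$; this is the portion of the star expansion that records which heads can deposit forcing onto which coordinates. By construction the neighborhood $N(S)$ of a set $S=\{v_1,\dots,v_s\}$ is exactly the collection of heads directed at $S$, so a hyperedge dilation of $S$ is literally the Hall violation $|N(S)|<|S|$. Hall's theorem then states that $B$ admits a matching saturating the accessible vertices if and only if no such dilation exists.

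The crux is a bridge lemma: every accessible vertex is individually accessible if and only if $B$ has a matching saturating all accessible vertices (equivalently, the heads admit a system of distinct representatives). For the direction from a matching to individual accessibility, I would use the matching $v\mapsto e_v$ to order the vertices and, reading the set-union and set-difference operations of the definition as operations on the reachable vectors $\mathscr{W}_i$, peel the vertices off one at a time: because the representatives $e_v$ are distinct hyperedges, for generic entries of $\hat{\tA}$ the corresponding reachable vectors are linearly independent, so a triangular elimination isolates each coordinate. The validity of the underlying walks -- every tail visited before its hyperedge is used -- must be maintained throughout this construction, which is the delicate bookkeeping.

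The reverse direction of the bridge is where the counting lives, and it is worth isolating the right invariant. Once lifted to the reachable vectors, the set operations of the definition can only produce vectors lying in the span of the head-indicator vectors touching $S$, and that span has dimension at most the number of such heads. Consequently isolating all $s$ coordinates of $S$ forces at least $s$ distinct heads to meet $S$; contrapositively a dilation ($<s$ heads) caps the attainable dimension below $s$, so at least two coordinates of $S$ are forced into linear dependence for every choice of $\hat{\tA}$. This is the same dependence exploited in the proof of \cref{prop: non individually accessible is not SC}, and it exhibits a concrete accessible pair $\{v_1,v_2\}\subseteq S$ that is not individually accessible. Conversely, a bad pair is itself a two-element set that cannot be separated, so tracing it back through the span bound (equivalently, the failure of a saturating matching) produces a set with strictly fewer heads than vertices, i.e. a dilation.

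Assembling these pieces yields the biconditional: a bad pair exists iff not all accessible vertices are individually accessible iff (bridge) no saturating matching exists iff (Hall) a dilation occurs. The main obstacle I anticipate is the forward direction of the bridge -- turning a combinatorial system of distinct representatives into an explicit, walk-valid sequence of union and difference operations that isolates each vertex -- together with the conceptual point that the definition's set operations must be read through their linear image, so that \emph{dimension}, rather than the number of Boolean atoms, is the governing quantity; without that reading the pure set algebra on $t$ head-sets could spuriously separate up to $2^{t}$ vertices and the counting would collapse.
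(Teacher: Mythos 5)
Your route is genuinely different from the paper's. The paper proves the forward direction by contradiction with a direct case analysis on how the heads meeting $\{v_1,v_2\}$ intersect the pair (if no dilation, two heads with distinct intersections among $\{v_1\},\{v_2\},\{v_1,v_2\}$ must exist, and any two such intersections let one isolate both vertices by a set difference), and it dispatches the converse in a single unsupported sentence; you instead reduce both sides to a saturating-matching criterion plus Hall's theorem, which has the virtue of tying the proposition directly to the matching algorithm of \cref{alg:dilation detector}. The problem is that this reduction front-loads the entire content of the proposition into your ``bridge lemma,'' which you do not prove -- and, read against the paper's actual definitions, its forward direction is false. Individual accessibility in the paper is Boolean: $\{v\}$ must be produced by unions and set differences of final-vertex sets of walks. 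Take two hyperedges $e_1\neq e_2$ (distinct tails, both reachable) with identical heads $e_1^h=e_2^h=\{v_1,v_2\}$ and no other head containing $v_1$ or $v_2$. Your graph $B$ (right class $=\E$) has a matching saturating $\{v_1,v_2\}$ via the distinct representatives $e_1,e_2$, yet every walk-final set containing $v_1$ equals $\{v_1,v_2\}$, so no sequence of unions and differences can split the pair: matching does not imply individual accessibility. Your repair -- reading the set operations ``through their linear image'' with generic entries of $\hat{\tA}$ -- is not a gloss on the missing step; it \emph{is} the missing step, and it silently replaces the paper's definition with a structural-rank notion (generic rank of the reachable vectors is at least the matching number). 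If you adopt that reading you must say so explicitly, prove the genericity claim, and also restrict $B$ to hyperedges whose tails are eventually fully visited: a vertex can be accessible via one hyperedge while matched to another whose tail never fires, in which case the matched representative contributes no reachable vector and your triangular elimination has nothing to eliminate.

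The same coincident-heads example breaks your claim that a dilation is ``literally'' the Hall violation $|N(S)|<|S|$. The paper's dilation counts the \emph{set of heads} directed at $S$, and heads are vertex sets, so the two coincident heads count once ($1<2$, a dilation), while $N(\{v_1,v_2\})$ in your $B$ has two elements (no Hall violation). With hyperedges as the right class, dilation and Hall violation therefore disagree; with distinct heads as the right class they agree, but then your matching selects heads rather than hyperedges and the genericity argument (which leans on distinct tails) must be rebuilt. None of this is fatal to the strategy: under a consistently linear reading of both ``individually accessible'' and dilation, the Hall's-theorem argument can be completed and would arguably be cleaner and more honest than the paper's own case analysis, whose treatment of the pair also wobbles on exactly these ambiguities. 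But as written, your proposal assumes its two key equivalences rather than proving them, and both fail under the paper's literal definitions.
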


\begin{proof}
    ($\Longrightarrow$) Suppose $\{v_1,v_2\dots,v_k\}$ are accessible from walk $w_1,$ and assume there is no dilation containing vertices $v_1$ and $v_2.$ Then there must exist at least two hyperedges $e_1$ and $e_2$ that contain vertices $v_1$ and $v_2$ in the hyperedge heads. Since $e_1$ and $e_2$  have distinct heads from one another with respect to $v_1$ and $v_2$, their heads must contain two of the three: $\{v_1\},\{v_2\},$ or $\{v_1,v_2\}.$ However, if $\{v_1\}$ and $\{v_2\}$ are the heads than both $v_1$ and $v_2$ are immediately individually accessible (a contradiction), and if $\{v_1,v_2\}$ and $\{v_1\}$ or $\{v_2\}$ are the heads, then $v_1$ and $v_2$ are accessible through the set difference of these heads (another contradiction). These contradictions violate our assumption, so the assumption that no dilation occurs is false, and we may conclude that when vertices are accessible but not individually accessible, a hyperedge dilation exist.

    ($\Longleftarrow$) If there exist a dilation, then there must exist individually inaccessible vertices.
\end{proof}

\begin{lemma}
    In the hypergraph $\h(\tA,\Bv)$, the presence of either a nonaccessable vertex or a hyperedge dilation are a sufficient condition for the pair $(\tA,\Bv)$ to be structurally uncontrollable.
\end{lemma}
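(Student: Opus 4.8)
The plan is to recognize that this lemma is essentially a bookkeeping corollary that assembles the three preceding propositions through a case split on the disjunction in the hypothesis. Since the definition of structural uncontrollability requires that \emph{no} realization $(\hat{\tA},\hat{\Bv})$ of the sparsity pattern be strongly controllable, and each invoked proposition already establishes rank deficiency of $\Cv$ uniformly over all such realizations, it suffices to route each of the two structural obstructions to one of those propositions.

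First I would assume $\h(\tA,\Bv)$ contains a nonaccessible vertex or a hyperedge dilation, and split into the two corresponding cases. In the first case, where some vertex $v\in\{1,\dots,n\}$ is not accessible from the control vertices $\{\{n+1\},\dots,\{n+m\}\}$, the conclusion is immediate from \cref{prop: inaccessible is not SC}: inaccessibility forces the $v$th row of $\Cv(\hat{\tA},\hat{\Bv})$ to vanish for every realization, so $rank(\Cv)<n$ throughout the sparsity class and the pair is structurally uncontrollable.

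In the second case, where a hyperedge dilation is present, I would chain two propositions. The backward direction of \cref{prop: dilaiton is not SC} converts the dilation into the existence of a pair of accessible vertices $\{v_1,v_2\}$ that is not individually accessible; \cref{prop: non individually accessible is not SC} then shows that such a pair induces a linear dependence between two rows of $\Cv(\hat{\tA},\hat{\Bv})$ for every realization, again yielding $rank(\Cv)<n$. Combining the two cases completes the argument.

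The hard part here is not mathematical — the substance lives in the three propositions — but rather logical hygiene. I would be careful that the quantifier over realizations is respected, since structural uncontrollability rules out \emph{all} pairs $(\hat{\tA},\hat{\Bv})$ rather than merely exhibiting a single bad one; and I would make sure the dilation case is routed through a pair that is accessible but not individually accessible, since a wholly inaccessible vertex is already subsumed by the first case. A short remark that the two obstructions need not be mutually exclusive justifies treating the disjunction by a plain case split rather than worrying about overlap.
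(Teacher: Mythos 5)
Your proposal is correct and follows essentially the same route as the paper's own proof: the inaccessible-vertex case is handled directly by \cref{prop: inaccessible is not SC}, and the dilation case is routed through the backward direction of \cref{prop: dilaiton is not SC} into \cref{prop: non individually accessible is not SC}. Your additional remarks on quantifier hygiene over realizations and on the non-exclusivity of the two cases are sound but not needed beyond what the paper's three propositions already provide.
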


\begin{proof}
    This follows as an immediate consequence of the previous propositions. If there is a inaccessible vertex, then \cref{prop: inaccessible is not SC} says $(\tA,\Bv)$ is not structurally controllable. If there is a dilation, then \cref{prop: dilaiton is not SC} says there exist individually inaccessible vertices, which \cref{prop: non individually accessible is not SC} says implies the system is not structurally controllable.
\end{proof}

\subsection{Necessary}
The next step is to show that the presence of a hyperedge dilation or inaccessible vertex is a necessary condition for $(\tA,\Bv)$ to be structurally uncontrollable.

\begin{prop}
    Suppose $(\tA,\Bv)$ is not structurally controllable. Then $\h(\tA,\Bv)$ contains either a hyperedge dilation or  inaccessible vertex.
\end{prop}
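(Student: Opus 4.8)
The plan is to prove the contrapositive: assuming $\h(\tA,\Bv)$ contains neither a hyperedge dilation nor an inaccessible vertex, I would construct a strongly controllable pair $(\hat{\tA},\hat{\Bv})$ sharing the sparsity of $(\tA,\Bv)$, which by definition renders $(\tA,\Bv)$ structurally controllable. By the Kalman-type test of \cite{chen2021controllability}, strong controllability is equivalent to $rank(\Cv(\hat{\tA},\hat{\Bv}))=n$, so it suffices to exhibit a support-respecting assignment of entries for which $\Cv$ attains full rank. Each maximal $n\times n$ minor of $\Cv(\hat{\tA},\hat{\Bv})$ is a polynomial in the free (nonzero) entries of $\hat{\tA}$ and $\hat{\Bv}$; as in the genericity argument behind \cref{prop: open dense multlilinear}, it is then enough to produce a single assignment making one such minor nonzero, since the full-rank condition holds on an open dense subset of the admissible parameter space as soon as it holds at one point.

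The combinatorial input comes from the two hypotheses. Because no vertex is inaccessible, every vertex in $\{1,\dots,n\}$ is accessible from the control vertices $\{\{n+1\},\dots,\{n+m\}\}$. Because there is no dilation, \cref{prop: dilaiton is not SC} guarantees that accessible vertices are in fact individually accessible. Combining these, every system vertex $v$ is individually accessible: there exist walks $w_1,\dots,w_k$ whose final-vertex sets combine, under unions and set differences, to the singleton $\{v\}$.

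The heart of the argument --- and what I expect to be the main obstacle --- is translating this set-theoretic statement into a linear-algebraic one about the columns of $\Cv(\hat{\tA},\hat{\Bv})$. The recursion defining $\mathscr{W}_i$ mirrors the column generation of $\Cv$, so each walk yields a column whose support is exactly its set of final vertices, provided the entries are generic enough that no coordinate which should appear accidentally vanishes. I would then argue that the unions and differences witnessing individual accessibility lift to linear combinations of the corresponding columns: a union of supports is automatic within the span, while a set difference requires two accessible sets agreeing on all but the target coordinate, so that a suitably scaled subtraction annihilates the shared coordinates. The delicate point is exactly that the cancellations occur on the prescribed shared coordinates and nowhere else, and that the surviving coordinate does not also cancel --- conditions that are linear in the free parameters and fail only on a proper subvariety, hence are satisfied generically. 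The result of such a combination is a nonzero multiple of the basis vector $\ev_v$.

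Carrying this out for every $v$ shows that, for a generic admissible assignment, each of $\ev_1,\dots,\ev_n$ lies in the column span of $\Cv(\hat{\tA},\hat{\Bv})$, whence $rank(\Cv(\hat{\tA},\hat{\Bv}))=n$. The pair $(\hat{\tA},\hat{\Bv})$ is then strongly controllable, so $(\tA,\Bv)$ is structurally controllable, contradicting the standing assumption. Therefore a pair that is not structurally controllable must exhibit either a hyperedge dilation or an inaccessible vertex.
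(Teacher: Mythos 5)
Your high-level framing is fine --- arguing the contrapositive and invoking genericity (one full-rank instance of $\Cv(\hat{\tA},\hat{\Bv})$ implies full rank on an open dense set of admissible parameters) is a legitimate route, and it is genuinely different from the paper's proof, which argues directly by splitting into the cases $rank\big(\begin{bmatrix}\hat{\tA}_{(k)}&\hat{\Bv}\end{bmatrix}\big)<n$ and $=n$. But the step you yourself flag as delicate contains a real gap, and as written it fails. Every column of $\Cv(\hat{\tA},\hat{\Bv})$ produced by contracting $\hat{\tA}$ against previously generated columns decomposes as a sum, over hyperedges $e=(e^t,e^h)$, of a \emph{scalar} (the product of the tail coordinates, common to all head coordinates) times the fixed vector $d_e\in\R^n$ whose $j$th entry is $\hat{\tA}_{j,e^t}$ for $j\in e^h$ and zero otherwise. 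Consequently every column generated through a given hyperedge is parallel to $d_e$, and the column span of $\Cv$ lies in the span of the $d_e$'s together with the columns of $\hat{\Bv}$. Now suppose individual accessibility of $v$ is witnessed by a set difference $e_1^h\setminus e_2^h$ with $e_2^h\subset e_1^h$ and $|e_2^h|\geq 2$. Your subtraction needs a single scalar $\alpha$ such that $d_{e_1}-\alpha d_{e_2}$ vanishes on \emph{all} of $e_2^h$, i.e. proportionality of $d_{e_1}$ and $d_{e_2}$ on two or more coordinates. That is a nontrivial polynomial relation among the free parameters: it holds only on a proper subvariety, hence fails for generic assignments. This is exactly the opposite of your claim that the cancellation conditions ``are satisfied generically'' --- genericity protects claims of the form ``nothing vanishes accidentally,'' but it destroys claims of the form ``these coordinates cancel exactly.'' Only singleton subtractions ($|e_2^h|=1$) can be realized by a free choice of $\alpha$, and nothing in your argument (or in the paper's definition of individual accessibility) restricts the witnessing set operations to singleton differences.

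The repair is to replace coordinate-by-coordinate cancellation with a matching argument, which is also what connects the statement to \cref{sec: algorithms}. Absence of a hyperedge dilation says precisely that every set $S$ of system vertices meets at least $|S|$ hyperedge heads, so by Hall's theorem there is a perfect matching assigning to each system vertex $v$ a distinct hyperedge $e(v)$ with $v\in e(v)^h$; accessibility guarantees these hyperedges can all be activated in the recursion generating $\Cv$. The matching selects, in the matrix whose columns are the corresponding $d_{e(v)}$'s, a permutation monomial $\prod_v \hat{\tA}_{v,e(v)^t}$ that cannot be cancelled by any other term (distinct entries of the determinant expansion are distinct monomials in the free parameters), so the determinant is a nonzero polynomial and a generic admissible assignment gives $rank(\Cv(\hat{\tA},\hat{\Bv}))=n$. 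This preserves your architecture --- contrapositive, genericity, then combinatorics --- but the combinatorial input must enter through Hall's theorem rather than through lifting unions and set differences to linear combinations.
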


\begin{proof}
    If $(\tA,\Bv)$ is not structurally controllable, then $rank(\Cv(\tA,\Bv))<n$ for all entries in the set $(\hat{\tA},\hat{\Bv}).$ This can occur either if $rank(\begin{bmatrix}
        \hat{\tA}_{(k)}&\hat{\Bv}
    \end{bmatrix})<n$ or if $rank(\begin{bmatrix}
        \hat{\tA}_{(k)}&\hat{\Bv}
    \end{bmatrix})=n,$ where $\begin{bmatrix}
        \hat{\tA}_{(k)}&\hat{\Bv}
    \end{bmatrix}\in\R^{n\times n(k-1)+m}.$ Beginning with the case $rank(\begin{bmatrix}
        \hat{\tA}_{(k)}&\hat{\Bv}
    \end{bmatrix})<n,$ if this matrix is generally rank deficient, then there exist at least one row containing all zeros -- corresponding to an inaccessible vertex -- or if there are at least $n(k-2)+m+1$ columns containing all zeros -- corresponding to a dilation.
    
    Consider the alternative case where $rank(\begin{bmatrix}
        \hat{\tA}_{(k)}&\hat{\Bv}
    \end{bmatrix})=n.$ Then $dim(Im(\hat{\tA}_{(k)})\cup Im(\Bv))=n$ and $rank(\Cv(\tA,\Bv))<n.$ Since $rank(\Cv)<n,$ it follows that the dimension of $Im(\hat{\tA}_{(k)})$ shrinks when restricted to the domain of the image of $\Bv.$ This indicates that part of the image of $\hat{\tA}_{(k)}$ is inaccessible from the image of $\Bv,$ which is equivalent to their being a vertex that is not individually accessible from walks originating at the vertices of $\Bv.$ Since there is a vertex that is not individually accessible, there must exist either a inaccessible vertex or a hyperedge dilation. 
\end{proof}

We can now conclude that presence of inaccessible vertices or a hyperedge dilations is both neccessary and sufficient for a pair $(\tA,\Bv)$ to be structurally uncontrollable, and we have proved \cref{thm: structural control of polynomials}.

\section{A Test for Structural Controllability}
\label{sec: algorithms}

This section presents 2 algorithms to verify the separate conditions of \cref{thm: structural control of polynomials}, namely the existence of inaccessible nodes and hyperedge dilations. Checking for dilations is equivalent to solving a maximum matching problem, which can be accomplished in $\mathcal{O}(|\V|\sum_i|e_i^h|),$ and verifying the existence of inaccessible nodes can by performing a walk beginning at the control vertices over all hyperedges, which occurs in $\mathcal{O}(|\E|).$ The complexity to run both algorithms and verify \cref{thm: structural control of polynomials} is $\mathcal{O}(|\V|\sum_i|e_i^h|+|\E|).$

\subsection{Maximum Matching to Detect Dilations}
Maximum matching was utilized by \cite{liu2011controllability} to determine structural controllability of graphs/linear systems. We can map the problem of detecting hypergraph dilations to solving a maximum matching problem in a directed star expansion.

\begin{defn}[Directed Star Expansion]
    The star expansion of $\mathcal{H}=\{\mathcal{V},\mathcal{E}_h\}$ constructs a directed bipartite graph $\mathcal{S}=\{\mathcal{V}_s,\mathcal{E}_s\}$ by introducing a new set of vertices $\mathcal{V}_s=\mathcal{V}\cup \mathcal{E}_h$ where some vertices represent hyperedges. There exists a directed edge between each vertex $v,e\in \mathcal{V}_s$ when $v\in \mathcal{V}$, $e\in \mathcal{E}_h,$ and $v\in e$. The edge is oriented $v\xrightarrow{}e$ when $v\in e^t$ and oriented $e\xrightarrow{}v$ when $v\in e^h.$
\end{defn}

We denote by $\mathcal{S}(\tA,\Bv)$ the star expansion of $\h(\tA,\Bv).$

\begin{prop}\label{prop: hypergraph dilation is star dilation}
    A hypergraph dilation in $\h(\tA,\Bv)$ exist if and only if there exist a dilation in $\mathcal{S}(\tA,\Bv)$ where the star graph dilation is directed from hyperedge representing vertices of $\mathcal{S}(\tA,\Bv)$ to vertices representing the nodes of $\h(\tA,\Bv)$
\end{prop}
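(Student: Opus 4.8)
The plan is to prove the equivalence by directly unfolding the two dilation definitions and exhibiting an explicit correspondence between the in-neighborhood of a vertex set in $\mathcal{S}(\tA,\Bv)$ and the set of hyperedge heads pointing into that set in $\h(\tA,\Bv)$. Throughout I restrict attention to sets $S$ consisting only of the node-representing vertices of the star expansion (those in $\mathcal{V}$), which is precisely the restriction imposed by the clause that the star dilation be ``directed from hyperedge-representing vertices to node-representing vertices.''

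First I would fix a set $S \subseteq \mathcal{V}$ with $|S| = s$ and compute its in-neighborhood $\Gamma(S)$ in $\mathcal{S}(\tA,\Bv)$. By the definition of the directed star expansion, the only edges terminating at a node-vertex $v \in \mathcal{V}$ are those of the form $e \to v$, which exist exactly when $v \in e^h$. Consequently every in-neighbor of $S$ is a hyperedge-vertex, and $\Gamma(S) = \{\, e \in \mathcal{E}_h : e^h \cap S \neq \emptyset \,\}$; the tail edges $v \to e$ play no role because they point the wrong way. This identifies $\Gamma(S)$ with exactly ``the set of heads directed at at least one vertex in $S$'' from the hyperedge-dilation definition. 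Because distinct hyperedges are assigned distinct vertices in $\mathcal{V}_s$, this identification is a bijection, so $|\Gamma(S)|$ equals the number of hyperedges whose head meets $S$.

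With this correspondence in hand, both directions are immediate. For the forward implication, a hyperedge dilation is a set $S$ with (number of heads meeting $S$) $< s$; by the bijection this reads $|\Gamma(S)| < |S|$ in $\mathcal{S}(\tA,\Bv)$, i.e. a star dilation of the required orientation. For the converse, a star dilation of the stated type is a set $S$ of node-vertices with $|\Gamma(S)| < |S|$ whose in-neighbors are hyperedge-vertices; translating $\Gamma(S)$ back through the bijection yields a set of hypergraph vertices at which fewer than $|S|$ heads are directed, i.e. a hyperedge dilation.

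The routine work is the bookkeeping above; the only real point requiring care --- and the step I expect to be the main obstacle --- is pinning down the orientation so that the equivalence holds for the intended kind of dilation. Because $\mathcal{S}(\tA,\Bv)$ is bipartite with both node-vertices and hyperedge-vertices, one could form other in-neighborhoods (e.g. of hyperedge-vertices, whose in-neighbors are tails), and these do \emph{not} correspond to hyperedge dilations. I would therefore state explicitly that ``dilation in $\mathcal{S}(\tA,\Bv)$'' means a dilated set lying entirely in $\mathcal{V}$ with in-neighbors in $\mathcal{E}_h$, matching the directionality clause of the proposition, and verify that the tail orientation $v \to e$ is exactly what rules out spurious correspondences. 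A secondary check is that restricting to node-vertices loses no generality for detecting hyperedge dilations, since hyperedge dilations are by definition dilations of vertex sets of $\h$.
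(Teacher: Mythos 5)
Your proof is correct, but there is nothing in the paper to compare it against: the paper states \cref{prop: hypergraph dilation is star dilation} with no proof whatsoever and proceeds directly to \cref{alg:dilation detector}, so your unfolding of the two definitions supplies exactly the justification the paper leaves implicit. The core of your argument is sound: in $\mathcal{S}(\tA,\Bv)$ the only edges terminating at a node-representing vertex are of the form $e\to v$ with $v\in e^h$, so for $S\subseteq\V$ the in-neighborhood $\Gamma(S)$ consists precisely of the hyperedge-representing vertices whose heads meet $S$, and the two dilation inequalities coincide. One point deserves to be stated more explicitly than your bijection remark makes it: \cref{def: HG of tA Bv} requires only hypertails to be unique, so distinct hyperedges may carry identical head sets. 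If the phrase ``the set of heads directed at at least one vertex'' in the hyperedge-dilation definition were read as a set of vertex-subsets, duplicates would collapse --- two hyperedges with head $\{v_1,v_2\}$ and different tails give one head-set but two in-neighbors in the star expansion --- and the forward implication of the proposition would fail. Your reading, which counts hyperedges (equivalently, hyperedge-representing vertices), is the one under which the proposition is true, and it agrees with the paper's own gloss that a hypergraph dilation counts ``the number of hyperedges directed into $S$''; you should make that interpretive choice explicit rather than bury it inside the bijection remark. Your closing observation about orientation is also the right care to take: dilated sets of hyperedge-representing vertices, whose in-neighbors arise from tail edges, correspond to nothing in the hypergraph definition, which is precisely why the proposition carries its directionality clause and why restricting $S$ to node-representing vertices loses no generality.
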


From \cref{prop: hypergraph dilation is star dilation}, we can use classic algorithms to detect the presence of a dilation in $\h(\tA,\Bv)$ and verify the structural controllability of $(\tA,\Bv).$ In graphs, dilations can be detected as a maximum matching problem, and Ford-Fulkerson is a fast, classic algorithm to solve network flow problems that has been utilized to find maximum matchings and detect dilations. We can apply Ford-Fulkerson to detect dilations in our hypergraph.

\begin{algorithm}
\caption{Dilation Detector with Maximum Matching}
\label{alg:dilation detector}
\begin{algorithmic}[1]
    \REQUIRE $(\tA,\Bv)$
    \STATE Construct $\h(\tA,\Bv)=\{\V,\E\}$
    \STATE $\mathcal{S}=$star expansion of $\h(\tA,\Bv)$
    \STATE Maximum Matching = Ford-Fulkerson($\mathcal{S}$)
    \STATE Dilation = $(|$Maximum Matching$|=|\V|)$
    \RETURN Dilation
\end{algorithmic}
\end{algorithm}

The complexity of \cref{alg:dilation detector} is bounded by the complexity of Ford-Fulkerson, which is $\mathcal{O}(nm)$ where $n$ is the number of vertices and $m$ is the number of edges. In our specific case, \cref{alg:dilation detector} is $\mathcal{O}(|\V|\sum_i |e_i^h|)$ where $|e_i^h|$ is the size of the $i$th hyperhead.

\subsection{BFS for Accessibility}
Verification that all vertices are accessible can be performed in $\mathcal{O}(|\E|)$ by performing a walk beginning at control vertices that will step over each hyperedge in $\h(\tA,\Bv).$
\begin{algorithm}[h!]
\caption{Hypergraph BFS $\mathcal{O}(r|\E|)$}
\label{alg: BFS}
\begin{algorithmic}
    \REQUIRE $(\tA,\Bv)$
    \STATE Construct $\h(\tA,\Bv)=\{\V,\E\}$
    \STATE $A=\{n+1,\dots,n+m\}$
    \STATE $Q=$set of hyperedges containing hypertails formed from $A$
    \STATE $i=0$
    \WHILE{$Q$ is not empty and $i<|\E|$}
        \STATE $e=Q.pop()$
        \STATE $A=A\cup e^h$
        \STATE $N=$new accessible hyperedges from $A$
        \STATE $Q.put(N)$
        \STATE $i=i+1$
    \ENDWHILE
    \RETURN $A$
\end{algorithmic}
\end{algorithm}

\Cref{alg: BFS} returns the set of all vertices visited of a walk beginning at the control vertices. If $|A|=|\V|,$ then all vertices are accessible. It requires at most $\mathcal{O}(|\E|)$ iterations of the loop beginning on line 5 to verify this condition.

\bibliography{bibliography}

\end{document}